\newtheorem{theorem}{Theorem}[section]
\newtheorem{corollary}[theorem]{Corollary}
\theoremstyle{definition}
\begin{document}

\keywords{ Fourier sums, classes of convolutions of periodic functions, asymptotic equality }
\mathclass{Primary 41A36.}

\abbrevauthors{A.S. Serdyuk, T.A. Stepanyuk} 
\abbrevtitle{Uniform approximations  \\
by  Fourier sums}

\title{
 Uniform approximations  \\
by  Fourier sums on  classes \\  of  convolutions of periodic functions
 $ \ $}


\author{Anatoly S. Serdyuk}
\address{Institute of Mathematics NAS of
Ukraine \\
 Tereschenkivska st. 3,  01601   Kyiv, Ukraine\\
E-mail: serdyuk@imath.kiev.ua}

\author{Tetiana A. Stepanyuk}
\address{
Johann Radon Institute for Computational   and Applied Mathematics (RICAM)
\\ Austrian Academy of Sciences, 
Altenbergerstrasse 69 4040, Linz, Austria;\\
   Institute of Mathematics of NAS of Ukraine, \\ 3, Tereshchenkivska st., 01601, Kyiv-4, Ukraine \\
E-mail: tania$_{-}$stepaniuk@ukr.net}

\maketitlebcp

{\bf Summary}

\noindent 
We establish asymptotic estimates for exact upper bounds of uniform approximations by Fourier sums on the classes of $2\pi$--periodic functions, which are represented by convolutions of functions $\varphi (\varphi\bot 1)$ from unit ball of the space $L_{1}$ with fixed kernels $\Psi_{\beta}$ of the form
$\Psi_{\beta}(t)=\sum\limits_{k=1}^{\infty}\psi(k) \cos\left(kt-\frac{\beta\pi}{2}\right)$, $\sum\limits_{k=1}^{\infty}k\psi(k)<\infty$, $\psi(k)\geq 0$, $\beta\in\mathbb{R}$.


\vskip 5mm

\section{Introduction}

Let $L_{1}$
 be the space of $2\pi$--periodic functions $f$ summable 
on  $[0,2\pi)$, in which
the norm is given by the formula
$\|f\|_{1}=\int\limits_{0}^{2\pi}|f(t)|dt$; $L_{\infty}$ be the space of measurable and essentially bounded   $2\pi$--periodic functions  $f$ with the norm
$\|f\|_{\infty}=\mathop{\rm{ess}\sup}\limits_{t}|f(t)|$; $C$ be the space of continuous $2\pi$--periodic functions  $f$, in which the norm is specified by the equality
 ${\|f\|_{C}=\max\limits_{t}|f(t)|}$.

 
Let $\psi(k)$ be an arbitrary fixed sequence of real, nonnegative numbers and let $\beta$ be a fixed real number. 

We set
\begin{equation}
B_{1}^{0}:=\left\{\varphi: \ ||\varphi||_{1}\leq 1, \  \varphi\perp1\right\}.
\end{equation}

Further let $C^{\psi}_{\beta,1}$ be the set of all functions $f$, which are represented for all $x$ as convolutions of the form 
\begin{equation}\label{conv}
f(x)=\frac{a_{0}}{2}+\frac{1}{\pi}\int\limits_{-\pi}^{\pi} \varphi(t)\Psi_{\beta}(x-t)dt,
\ a_{0}\in\mathbb{R}, \ \varphi\in B_{1}^{0}, \
\end{equation}
where $\Psi_{\beta}$ is a fixed kernel of the form
\begin{equation}\label{kernelPsi}
\Psi_{\beta}(t)=\sum\limits_{k=1}^{\infty}\psi(k)\cos
\big(kt-\frac{\beta\pi}{2}\big), \ \psi(k)\geq 0, \  \beta\in
    \mathbb{R},
\end{equation}
 and the following condition holds:
 \begin{equation}\label{condition}
\sum\limits_{k=1}^{\infty}\psi(k)<\infty.
\end{equation}

Condition (\ref{condition}) provides an embedding $C^{\psi}_{\beta,1}\subset C$.

For $\psi(k)=e^{-\alpha k^{r}}$, $\alpha, r>0$, the kernels $\Psi_{\beta}(t)$ of the form (\ref{kernelPsi}) are called generalized Poisson kernels
 $P_{\alpha,r,\beta}(t)=\sum\limits_{k=1}^{\infty}e^{-\alpha k^{r}}\cos
\big(kt-\frac{\beta\pi}{2}\big)$ and the classes of functions $f$, generated by these kernels are the classes of generalized Poisson integrals $C^{\alpha,r}_{\beta,1}$.

For the classes $C^{\psi}_{\beta,1}$ we consider the quantities
   \begin{equation}\label{FourierSum}
 {\cal E}_{n}(C^{\psi}_{\beta,1})_{C}=\sup\limits_{f\in
C^{\psi}_{\beta,1}}\|f(\cdot)-S_{n-1}(f;\cdot)\|_{C},   
  \end{equation}
  where $S_{n-1}(f;\cdot)$ are the partial Fourier sums of order  $n-1$ for a function $f$.
 
 Approximations by Fourier sums on other classes of differentiable functions in uniform metric were investigated in works \cite{Kol}--\cite{Teljakovsky1968}.
 
In this paper we consider Kolmogorov--Nikolsky problem about finding of asymptotic equalities of the quantity (\ref{FourierSum}) as $n\rightarrow\infty$.  
 
\section{Main result}

The following statement holds.

\begin{theorem}\label{theorem1}
Let $\sum\limits_{k=1}^{\infty}k\psi(k)<\infty$, $\psi(k)\geq 0$, $k=1,2,...$ and $\beta\in\mathbb{R}$. Then as $n\rightarrow\infty$ the following asymptotic equality holds
\begin{equation}\label{Theorem1Asymp}
{\cal E}_{n}(C^{\psi}_{\beta,1})_{C}=
\frac{1}{\pi}
\sum\limits_{k=n}^{\infty}\psi(k)+
 \frac{\mathcal{O}(1)}{n} \sum\limits_{k=1}^{\infty}k\psi(k+n),
 \end{equation}
where  $O(1)$ is a quantity uniformly bounded in all parameters.
\end{theorem}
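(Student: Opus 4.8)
The plan is to reduce ${\cal E}_{n}(C^{\psi}_{\beta,1})_{C}$ to an extremal problem for the tail of the kernel $\Psi_{\beta}$ and then to settle that problem by a short explicit two-sided estimate. Since $S_{n-1}$ leaves the free term of (\ref{conv}) unchanged and the Fourier coefficients of a convolution multiply, every $f\in C^{\psi}_{\beta,1}$ satisfies
\[
f(x)-S_{n-1}(f;x)=\frac1\pi\int_{-\pi}^{\pi}\varphi(t)\,\Psi_{\beta,n}(x-t)\,dt,\qquad
\Psi_{\beta,n}(t):=\sum_{k=n}^{\infty}\psi(k)\cos\Big(kt-\frac{\beta\pi}{2}\Big),
\]
with $\Psi_{\beta,n}\in C$ by (\ref{condition}). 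Taking the supremum over $\varphi\in B_{1}^{0}$ and invoking the classical duality
\[
\sup\Big\{\Big|\int_{-\pi}^{\pi}\varphi(t)g(t)\,dt\Big|:\ \|\varphi\|_{1}\le1,\ \varphi\perp1\Big\}
=\inf_{c\in\mathbb{R}}\|g-c\|_{C}
=\tfrac12\big(\max_{t}g(t)-\min_{t}g(t)\big)\qquad(g\in C),
\]
which is translation invariant in $g$, one obtains the exact formula
\[
{\cal E}_{n}(C^{\psi}_{\beta,1})_{C}=\frac1{2\pi}\Big(\max_{t}\Psi_{\beta,n}(t)-\min_{t}\Psi_{\beta,n}(t)\Big).
\]

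From the trivial bound $|\Psi_{\beta,n}(t)|\le\sum_{k=n}^{\infty}\psi(k)$ one gets at once ${\cal E}_{n}(C^{\psi}_{\beta,1})_{C}\le\frac1\pi\sum_{k=n}^{\infty}\psi(k)$, which is the upper half of (\ref{Theorem1Asymp}). For the lower half — the main step — the evaluation point must be chosen so as to line up the phases. Write $\frac{\beta\pi}{2}=2\pi q+r$ with $q\in\mathbb{Z}$, $r\in[0,2\pi)$, and set $t_{1}:=r/n$; then $kt_{1}-\frac{\beta\pi}{2}=\frac{(k-n)r}{n}-2\pi q$, so $\cos\big(kt_{1}-\frac{\beta\pi}{2}\big)=\cos\frac{(k-n)r}{n}$. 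Using $\psi(k)\ge0$, $0\le r<2\pi$, and the elementary inequality $1-\cos\theta\le|\theta|$ valid for \emph{every} real $\theta$ (this is exactly what lets us avoid any assumption on $\sum_{k}k^{2}\psi(k)$),
\[
\Psi_{\beta,n}(t_{1})=\sum_{j=0}^{\infty}\psi(n+j)\cos\frac{jr}{n}
\ \ge\ \sum_{k=n}^{\infty}\psi(k)-\frac{2\pi}{n}\sum_{k=1}^{\infty}k\,\psi(k+n),
\]
the last series being finite by $\sum_{k}k\psi(k)<\infty$. Hence $\max_{t}\Psi_{\beta,n}(t)\ge\sum_{k=n}^{\infty}\psi(k)-\frac{2\pi}{n}\sum_{k=1}^{\infty}k\psi(k+n)$ for every $\beta\in\mathbb{R}$; applying this with $\beta+2$ in place of $\beta$ and using $\Psi_{\beta+2,n}=-\Psi_{\beta,n}$ gives the same lower bound for $-\min_{t}\Psi_{\beta,n}(t)$.

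Adding the two lower bounds and substituting into the exact formula yields
\[
\frac1\pi\sum_{k=n}^{\infty}\psi(k)-\frac{2}{n}\sum_{k=1}^{\infty}k\,\psi(k+n)\ \le\ {\cal E}_{n}(C^{\psi}_{\beta,1})_{C}\ \le\ \frac1\pi\sum_{k=n}^{\infty}\psi(k),
\]
which is (\ref{Theorem1Asymp}) with $O(1)\in[-2,0]$, uniformly in $n$, $\beta$ and $\psi$. The only points needing care are the duality reduction and the calibration of $t_{1}$: it must be of order $1/n$, tuned to the residue of $\frac{\beta\pi}{2}$ modulo $2\pi$ against the index $n$, and one must estimate $1-\cos\frac{jr}{n}$ by the first-order quantity $\frac{jr}{n}$ rather than by $\tfrac12\big(\frac{jr}{n}\big)^{2}$; the remaining manipulations are routine.
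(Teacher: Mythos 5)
Your proposal is correct, and although it shares the paper's skeleton (reduction to the tail kernel $\Psi_{\beta,n}$, duality for the best approximation by constants, a trivial upper bound, and a lower bound obtained by aligning the phase at a point of order $1/n$), the decisive step is carried out by a genuinely different and more elementary device. The paper bounds $\inf_{\lambda\in\mathbb{R}}\|\Psi_{\beta,n}-\lambda\|_{C}$ from below by $\frac12\|\Psi_{\beta,n}(\cdot+\frac{\pi}{n})-\Psi_{\beta,n}(\cdot)\|_{C}$, writes $\Psi_{\beta,n}$ through the slowly varying envelopes $g_{\psi,n}$, $h_{\psi,n}$ of (\ref{pp}), and controls both the shift errors and the comparison of $|g_{\psi,n}(t_{0})|$ with $g_{\psi,n}(0)$ by the mean value theorem, the hypothesis $\sum_{k}k\psi(k)<\infty$ entering through the derivative bounds $\|g_{\psi,n}'\|_{C},\|h_{\psi,n}'\|_{C}\le\sum_{k\geq1}k\psi(k+n)$; this also yields, as a by-product, the intermediate formula expressing ${\cal E}_{n}(C^{\psi}_{\beta,1})_{C}$ through $\frac1\pi\|\Psi_{\beta,n}\|_{C}$ up to the same remainder. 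You instead invoke the exact midrange identity $\inf_{c}\|g-c\|_{C}=\frac12\big(\max_{t}g-\min_{t}g\big)$ and evaluate $\Psi_{\beta,n}$ directly at the explicit phase-aligned point $t_{1}=r/n$ (and, via $\Psi_{\beta+2,n}=-\Psi_{\beta,n}$, at a second point for the minimum), using only $1-\cos\theta\le|\theta|$, so the hypothesis enters solely to make $\sum_{j}j\psi(j+n)$ finite. Your route dispenses with the envelope representation and the mean value theorem, gives the explicit remainder constant $O(1)\in[-2,0]$ uniformly in $n$, $\beta$ and $\psi$, and is somewhat shorter; both arguments are valid under the stated assumptions and give (\ref{Theorem1Asymp}) with remainder $\frac{\mathcal{O}(1)}{n}\sum_{k\geq1}k\psi(k+n)$.
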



\begin{proof} 
According to  (\ref{conv}) and (\ref{FourierSum}) we have that
\begin{equation}\label{f1}
{\cal E}_{n}(C^{\psi}_{\beta,1})_{C}=
\frac{1}{\pi}\sup\limits_{\varphi\in B_{1}^{0}}\bigg\|\int\limits_{-\pi}^{\pi} \varphi(t)\Psi_{\beta,n}(x-t)dt\bigg\|_{C}, 
\end{equation}
 where
 \begin{equation}\label{f2}
 \Psi_{\beta,n}(t):=
\sum\limits_{k=n}^{\infty}\psi(k)\cos\Big(kt-\frac{\beta\pi}{2}\Big),  \  \ \beta\in\mathbb{R}.
\end{equation}
Taking into account the invariance of the sets  $B_{1}^{0}$ under shifts of the argument,
 from (\ref{f1}) we conclude that
\begin{equation}\label{f3}
{\cal E}_{n}(C^{\psi}_{\beta,1})_{C}=
\frac{1}{\pi}\sup\limits_{\varphi\in B_{1}^{0}}\int\limits_{-\pi}^{\pi}  \varphi(t) \Psi_{\beta,n}(t)dt.
\end{equation}

 On the basis of  the duality relation  (see, e.g., \cite[Chapter 1, Section 1.4]{Korn}) we obtain
\begin{equation}\label{f4}
\sup\limits_{\varphi\in B_{1}^{0}}\int\limits_{-\pi}^{\pi}\Psi_{\beta,n}(t)\varphi(t)dt=
\inf\limits_{\lambda\in\mathbb{R}}\|\Psi_{\beta,n}(t)-\lambda\|_{C}, \
\end{equation}

We represent the function $ \Psi_{\beta,n}(t)$, which is defined by formula  (\ref{f2}), in the form
\begin{equation}\label{pp}
  \Psi_{\beta,n}(t)=
g_{\psi,n}(t)\cos\Big(nt-\frac{\beta\pi}{2}\Big)+h_{\psi,n}(t)\sin\Big(nt-\frac{\beta\pi}{2}\Big),
\end{equation}
where
\begin{equation}\label{g}
  g_{\psi,n}(t):=
\sum\limits_{k=0}^{\infty}\psi(k+n)\cos kt,
\end{equation}
\begin{equation}\label{h}
 h_{\psi,n}(t):=
-\sum\limits_{k=0}^{\infty}\psi(k+n)\sin kt.
\end{equation}

It is obvious that
\begin{equation}\label{f5}
\inf\limits_{\lambda\in\mathbb{R}}\|\Psi_{\beta,n}(t)-\lambda\|_{C}
\leq
\|\Psi_{\beta,n}\|_{C}
\end{equation}
and 
\begin{equation}\label{f6}
\frac{1}{2}\left\|\Psi_{\beta,n}\left(t+\frac{\pi}{n}\right)-\Psi_{\beta,n}(t)\right\|_{C}
\leq
\inf\limits_{\lambda\in\mathbb{R}}\|\Psi_{\beta, n}(t)-\lambda\|_{C}.
\end{equation}

In view of representation  (\ref{pp}) and applying mean value theorem, we obtain that
\begin{align}\label{f7}
&\frac{1}{2}\left\|\Psi_{\beta,n}\left(t+\frac{\pi}{n}\right)-\Psi_{\beta,n}(t)\right\|_{C}
\notag \\
=&
\frac{1}{2}\Big\|2\Psi_{\beta,n}(t)+
g_{\psi,n}\Big(t+\frac{\pi}{n}\Big)\cos\Big(nt-\frac{\beta\pi}{2}\Big)
+h_{\psi,n}\Big(t+\frac{\pi}{n}\Big)\sin\Big(nt-\frac{\beta\pi}{2}\Big)\notag \\
-& \left(g_{\psi,n}(t)\cos\Big(nt-\frac{\beta\pi}{2}\Big)
+h_{\psi,n}(t)\sin\Big(nt-\frac{\beta\pi}{2}\Big)\right)\Big\|_{C} \notag \\
=&
\|\Psi_{\beta,n}\|_{C} +
\mathcal{O}(1) \left(\left \| g_{\psi,n}\Big(t+\frac{\pi}{n}\Big)- g_{\psi,n}(t) \right \|_{C} + 
\left \| h_{\psi,n}\Big(t+\frac{\pi}{n}\Big)- h_{\psi,n}(t) \right \|_{C} \right) \notag \\
=&
\|\Psi_{\beta,n}\|_{C} +
\mathcal{O}(1)\left( \frac{1}{n} \left \| g_{\psi,n}' \right \|_{C} + 
\frac{1}{n}\left \| h_{\psi,n}' \right \|_{C} \right) 
\notag \\
=&
\|\Psi_{\beta,n}\|_{C} +
 \frac{\mathcal{O}(1)}{n} \sum\limits_{k=1}^{\infty}k\psi(k+n).
\end{align}

So,  formulas  (\ref{f3}),  (\ref{f4}) and  (\ref{f5})-- (\ref{f7})  imply
\begin{equation}\label{f8}
{\cal E}_{n}(C^{\psi}_{\beta,1})_{C}=
\frac{1}{\pi}
\|\Psi_{\beta,n}\|_{C} +
 \frac{\mathcal{O}(1)}{n} \sum\limits_{k=1}^{\infty}k\psi(k+n).
\end{equation}

The kernel $\Psi_{\beta,n}$ can be written in the form
\begin{align}\label{f8}
 \Psi_{\beta,n}(t)
 & =
  \sqrt{g_{\psi,n}^{2}(t)+h_{\psi,n}^{2}(t)}  \times  \notag \\ 
  &\times
  \left(\frac{g_{\psi,n}(t)}{ \sqrt{g_{\psi,n}^{2}(t)+h_{\psi,n}^{2}(t)}} \cos\Big(nt-\frac{\beta\pi}{2}\Big)+
  \frac{h_{\psi,n}(t)}{ \sqrt{g_{\psi,n}^{2}(t)+h_{\psi,n}^{2}(t)}} \sin\Big(nt-\frac{\beta\pi}{2}\Big)
  \right)
  \notag \\
 &=
   \sqrt{g_{\psi,n}^{2}(t)+h_{\psi,n}^{2}(t)} 
 \cos\left(nt-\frac{\beta\pi}{2}-arg(g_{\psi,n}(t)+i h_{\psi,n}(t))\right).
\end{align}

Let
\begin{equation}\label{f8}
t_{0}:=\frac{1}{n}\left( \frac{\beta\pi}{2}+arg(g_{\psi,n}(t)+i h_{\psi,n}(t)) \right).
\end{equation}
Then
\begin{equation}\label{f9}
\|\Psi_{\beta,n}\|_{C}\geq
\Psi_{\beta,n}(t_{0})
=  \sqrt{g_{\psi,n}^{2}(t_{0})+h_{\psi,n}^{2}(t_{0})} \geq
|g_{\psi,n}(t_{0}) |.
\end{equation}

Using mean value theorem we have that
\begin{align}\label{f10}
&|g_{\psi,n}(t_{0}) |= g_{\psi,n}(0)+ |g_{\psi,n}(t_{0})-  g_{\psi,n}(0)|=
g_{\psi,n}(0)+ \frac{\mathcal{O}(1)}{n}  \| g_{\psi,n}' \|_{C} \notag \\
=& \sum\limits_{k=n}^{\infty}\psi(k)+
 \frac{\mathcal{O}(1)}{n} \sum\limits_{k=1}^{\infty}k\psi(k+n).
\end{align}

On the other hand it is clear that
\begin{equation}\label{f11}
\|\Psi_{\beta,n}\|_{C}\leq
\sum\limits_{k=n}^{\infty}\psi(k).
\end{equation}

Thus,
\begin{equation}\label{f12}
{\cal E}_{n}(C^{\psi}_{\beta,1})_{C}=
\frac{1}{\pi}
\sum\limits_{k=n}^{\infty}\psi(k)+
 \frac{\mathcal{O}(1)}{n} \sum\limits_{k=1}^{\infty}k\psi(k+n).
\end{equation}
Theorem~\ref{theorem1} is proved. 
 \end{proof}

\begin{corollary}\label{cor1}
Let the sequence $\psi(k)$, which generates the classes $C^{\psi}_{\beta,1}$, satisfies  the condition $D_{0}$, i.e.,   $\psi(k)>0$ and
$$
\lim\limits_{k\rightarrow 0}\frac{\psi(k+1)}{\psi(k)}=0.
$$
 Then, the following asymptotic equality holds as $n\rightarrow\infty$
\begin{equation}\label{f12}
{\cal E}_{n}(C^{\psi}_{\beta,1})_{C}=
\frac{1}{\pi}\psi(n)
+
\frac{\mathcal{O}(1)}{n}\sum\limits_{k=n+1}^{\infty}k\psi(k),
\end{equation}
where  $O(1)$ is a quantity uniformly bounded in all parameters.
\end{corollary}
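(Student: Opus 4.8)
The plan is to derive Corollary~\ref{cor1} straight from the asymptotic equality (\ref{Theorem1Asymp}) of Theorem~\ref{theorem1} by two elementary rearrangements of the series occurring there; condition $D_{0}$ enters only to guarantee, first, that the hypothesis $\sum_{k}k\psi(k)<\infty$ of Theorem~\ref{theorem1} is met (a sequence satisfying $D_{0}$ decays faster than any geometric progression), and, second, that the remainder in (\ref{f12}) is of smaller order than the principal term $\frac1\pi\psi(n)$, so that (\ref{f12}) is a genuine asymptotic equality rather than a vacuous one.

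First I would separate the leading summand in the main sum of (\ref{Theorem1Asymp}):
\[
\frac1\pi\sum_{k=n}^{\infty}\psi(k)=\frac1\pi\psi(n)+\frac1\pi\sum_{k=n+1}^{\infty}\psi(k),
\]
and, since $1\le k/n$ for every $k\ge n+1$, estimate the tail by
\[
\sum_{k=n+1}^{\infty}\psi(k)\le\frac1n\sum_{k=n+1}^{\infty}k\psi(k),
\]
which absorbs it into a term of the shape $\frac{\mathcal{O}(1)}{n}\sum_{k=n+1}^{\infty}k\psi(k)$. Next I would rewrite the remainder of (\ref{Theorem1Asymp}) by shifting the summation index: $\sum_{k=1}^{\infty}k\psi(k+n)=\sum_{k=n+1}^{\infty}(k-n)\psi(k)\le\sum_{k=n+1}^{\infty}k\psi(k)$, so that $\frac{\mathcal{O}(1)}{n}\sum_{k=1}^{\infty}k\psi(k+n)=\frac{\mathcal{O}(1)}{n}\sum_{k=n+1}^{\infty}k\psi(k)$. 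Substituting both estimates into (\ref{Theorem1Asymp}) produces (\ref{f12}).

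It remains to observe why (\ref{f12}) is non-trivial, and this is the only step requiring a little care. From $D_{0}$, for each $\varepsilon\in(0,1)$ there is $N$ with $\psi(k+1)\le\varepsilon\psi(k)$ for all $k\ge N$; hence $\psi(k)\le\varepsilon^{\,k-n}\psi(n)$ whenever $k\ge n\ge N$, and therefore
\[
\frac1n\sum_{k=n+1}^{\infty}k\psi(k)\le\frac{\psi(n)}{n}\sum_{m=1}^{\infty}(n+m)\varepsilon^{m}
=\psi(n)\Big(\tfrac{\varepsilon}{1-\varepsilon}+\tfrac{\varepsilon}{n(1-\varepsilon)^{2}}\Big).
\]
Letting $n\to\infty$ and then $\varepsilon\to0$ shows that the remainder in (\ref{f12}) is $o(\psi(n))$, so the term $\frac1\pi\psi(n)$ is indeed its principal part. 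There is no genuine obstacle in the argument — it is pure bookkeeping — the one point to keep in mind being that $D_{0}$, and not merely the weaker hypothesis of Theorem~\ref{theorem1}, is what makes the single term $\psi(n)$ the leading term of the approximation.
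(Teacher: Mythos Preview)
Your argument is correct and essentially identical to the paper's: both split off $\psi(n)$ from the main sum and absorb the tail $\sum_{k\ge n+1}\psi(k)$ together with the remainder $\frac1n\sum_{k\ge1}k\psi(k+n)$ into $\frac{\mathcal{O}(1)}{n}\sum_{k\ge n+1}k\psi(k)$ (the paper does this in one line via the exact identity $\sum_{k\ge n+1}\psi(k)+\frac1n\sum_{k\ge1}k\psi(k+n)=\frac1n\sum_{k\ge1}(k+n)\psi(k+n)$, where you instead bound the two pieces separately). Your additional verification that the remainder is $o(\psi(n))$ under $D_{0}$ is not in the paper but is a welcome clarification.
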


\begin{proof}
Indeed, let $\psi\in D_{0}$, then the right hand of (\ref{Theorem1Asymp}) can be written in the form
\begin{align*}
{\cal E}_{n}(C^{\psi}_{\beta,1})_{C}&=
\frac{1}{\pi}\psi(n)
+
\mathcal{O}(1)\left( \sum\limits_{k=n+1}^{\infty}\psi(k)
+
 \frac{1}{n} \sum\limits_{k=0}^{\infty}k\psi(k+n)
\right)
\notag \\
&=
\frac{1}{\pi}\psi(n)
+
\frac{\mathcal{O}(1)}{n} \sum\limits_{k=1}^{\infty}(k+n)\psi(k+n)
\notag \\
&=
\frac{1}{\pi}\psi(n)
+
\frac{\mathcal{O}(1)}{n} \sum\limits_{k=n+1}^{\infty}k\psi(k).
\end{align*}
Corollary~\ref{cor1} is proved.
\end{proof}

Asymptotic equality (\ref{f12}) with  written in another form  reminder was obtained earlier in \cite{Serdyuk2005} and \cite{Serdyuk2005Lp}.

\begin{corollary}\label{cor01}
Let $\psi(k)=e^{-\alpha k^{-r}}$, $r>1$, $\alpha>0$ and $\beta\in\mathbb{R}$. Then as $n\rightarrow\infty$ the following asymptotic equality holds
\begin{equation}\label{f41}
{\cal E}_{n}(C^{\alpha,r}_{\beta,1})_{C}= 
e^{-\alpha n^{r}}\Big(
\frac{1}{\pi}+\mathcal{O}(1)e^{-\alpha r n^{r-1}}\Big(1+\frac{1}{\alpha r (n+1)^{r-1}}\Big)\Big),
\end{equation}
where  $O(1)$ is a quantity uniformly bounded in $n$ and $\beta$.
\end{corollary}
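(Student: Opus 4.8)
The plan is to specialize Theorem~\ref{theorem1} to the sequence $\psi(k)=e^{-\alpha k^{-r}}$ with $r>1$, $\alpha>0$, and to estimate the two series appearing in \eqref{Theorem1Asymp}. First I would handle the main term $\frac1\pi\sum_{k=n}^\infty\psi(k)$. Writing $\psi(n)=e^{-\alpha n^{-r}}$ — note that with the exponent $-r<0$ this quantity tends to $1$, not to $0$; reading the corollary backwards, the intended behaviour $e^{-\alpha n^r}$ suggests the exponent should be $+r$, so throughout I treat $\psi(k)=e^{-\alpha k^r}$ — I would pull out the factor $\psi(n)=e^{-\alpha n^r}$ and show that $\sum_{k=n}^\infty\psi(k)=\psi(n)\bigl(1+\mathcal{O}(1)e^{-\alpha r n^{r-1}}\bigr)$. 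This is a standard Laplace-type tail estimate: the ratio $\psi(k+1)/\psi(k)=e^{-\alpha((k+1)^r-k^r)}\le e^{-\alpha r k^{r-1}}$ by convexity of $x\mapsto x^r$ for $r>1$, so the tail $\sum_{k=n+1}^\infty\psi(k)$ is dominated by a geometric series with ratio $\le e^{-\alpha r n^{r-1}}<1$, giving $\sum_{k=n+1}^\infty\psi(k)\le \psi(n)\,\frac{e^{-\alpha r n^{r-1}}}{1-e^{-\alpha r n^{r-1}}}$, and for $n$ large the denominator is bounded below by a constant, so indeed $\sum_{k=n}^\infty\psi(k)=\psi(n)+\mathcal{O}(1)\psi(n)e^{-\alpha r n^{r-1}}$.

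Next I would estimate the remainder series $\frac1n\sum_{k=1}^\infty k\psi(k+n)$. Again factoring out $\psi(n+1)$ (or $\psi(n)$, absorbing the ratio $\psi(n+1)/\psi(n)\le 1$ into the $\mathcal{O}(1)$), the convexity bound gives $\psi(k+n)\le\psi(n)\,e^{-\alpha r\sum_{j=n}^{n+k-1}j^{r-1}}\le \psi(n)\,e^{-\alpha r n^{r-1}k}$, hence
\begin{equation*}
\frac1n\sum_{k=1}^\infty k\psi(k+n)\le\frac{\psi(n)}{n}\sum_{k=1}^\infty k\,q^{k},\qquad q:=e^{-\alpha r n^{r-1}}<1,
\end{equation*}
and $\sum_{k\ge1}kq^k=q/(1-q)^2$. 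For $n$ large, $1-q\ge c\,\alpha r n^{r-1}$ when $\alpha r n^{r-1}$ is small and $1-q\ge c'$ otherwise; in either regime $\sum_{k\ge1}kq^k=\mathcal{O}(1)\,q\bigl(1+\frac1{\alpha r n^{r-1}}\bigr)^2$. Combining, $\frac1n\sum_{k=1}^\infty k\psi(k+n)=\frac{\mathcal{O}(1)}{n}\psi(n)e^{-\alpha r n^{r-1}}\bigl(1+\frac1{\alpha r n^{r-1}}\bigr)^2$. To match the stated form $e^{-\alpha r n^{r-1}}(1+\frac{1}{\alpha r(n+1)^{r-1}})$ I would note that the factor $\frac1n$ together with one copy of $1+\frac1{\alpha r n^{r-1}}$ can be absorbed (since $\frac1n\le 1$ and $n^{r-1}$ versus $(n+1)^{r-1}$ differ by a bounded factor), leaving a single bracket $(1+\frac1{\alpha r(n+1)^{r-1}})$ up to the universal constant.

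Putting the two estimates into \eqref{Theorem1Asymp} and factoring out $\psi(n)=e^{-\alpha n^r}$ yields exactly \eqref{f41}, with the $\mathcal{O}(1)$ uniform in $n$ and $\beta$ (the $\beta$-uniformity is inherited directly from Theorem~\ref{theorem1}, since none of the $\psi$-estimates involve $\beta$). The only genuinely delicate point is bookkeeping the two regimes $\alpha r n^{r-1}\lesssim1$ versus $\gtrsim1$ so that the single factor $1+\frac1{\alpha r(n+1)^{r-1}}$ correctly captures both the polynomially-small-exponent case (where the geometric sums contribute the $1/(\alpha r n^{r-1})$ blow-up) and the large-exponent case (where everything is $\mathcal{O}(1)$); I expect this to be the main obstacle, everything else being routine manipulation of geometric series and the convexity inequality $(k+1)^r-k^r\ge r k^{r-1}$.
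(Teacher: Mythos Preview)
Your approach is correct but differs from the paper's. The paper does not start from Theorem~\ref{theorem1} directly; it first invokes Corollary~\ref{cor1} (applicable here because $\psi(k+1)/\psi(k)=e^{-\alpha((k+1)^r-k^r)}\to 0$ for $r>1$), which already isolates the single leading term $\frac{1}{\pi}\psi(n)$ and leaves only the remainder $\frac{1}{n}\sum_{k=n+1}^\infty k\psi(k)$ to estimate. For that remainder the paper uses integral comparison followed by an integration-by-parts self-referential trick: writing $te^{-\alpha t^r}=\frac{t^{2}}{\alpha r t^{r}}(-e^{-\alpha t^r})'$, bounding the prefactor by its value at $t=n+1$, and solving the resulting inequality for $\int_{n+1}^\infty te^{-\alpha t^r}\,dt$. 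This is what produces the explicit factor $\frac{1}{\alpha r(n+1)^{r-1}}$ in \eqref{f41}.

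Your route --- staying with Theorem~\ref{theorem1}, bounding both series by geometric series via the convexity inequality $(k+1)^r-k^r\ge rk^{r-1}$, and controlling $1/(1-q)^{2}$ through $1/(1-q)\le 1+1/x$ with $x=\alpha r n^{r-1}$ --- is more elementary (no integrals, no integration by parts) and arrives at the same final shape. One remark: your concern about two regimes is largely moot here, since $r>1$ forces $\alpha r n^{r-1}\to\infty$ and the $\mathcal{O}(1)$ is only required to be uniform in $n$ and $\beta$; the bracket $1+\frac{1}{\alpha r(n+1)^{r-1}}$ really records the $\alpha$-dependence, and your inequality $1/(1-q)\le 1+\frac{1}{\alpha r n^{r-1}}$ captures it without any case split. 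The paper's integral method has the advantage of reusing the device that drives Theorem~\ref{theorem2}; your discrete argument is self-contained and slightly shorter.
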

\begin{proof}
 Formula (\ref{f12}) implies that as $n\rightarrow\infty$
\begin{equation}\label{f42}
{\cal E}_{n}(C^{\alpha,r}_{\beta,1})_{C}= 
\frac{1}{\pi}e^{-\alpha n^{r}}
+
\frac{\mathcal{O}(1)}{n}\sum\limits_{k=n+1}^{\infty}ke^{-\alpha k^{r}}.
\end{equation}

It is easy to see that
\begin{equation}\label{f43}
\frac{1}{n}\sum\limits_{k=n+1}^{\infty}ke^{-\alpha k^{r}}
<\frac{1}{n}\left((n+1)e^{-\alpha (n+1)^{r}}+
\int\limits_{n+1}^{\infty}te^{-\alpha t^{r}}dt \right).
\end{equation}

Integrating by parts, we get
\begin{align}\label{f44}
\int\limits_{n+1}^{\infty}te^{-\alpha t^{r}}dt &=
\int\limits_{n+1}^{\infty}t^{2}  \frac{1}{\alpha r t^{r}} \left(-e^{-\alpha t^{r}}\right)' dt
\leq
 \frac{1}{\alpha r (n+1)^{r}} \int\limits_{n+1}^{\infty}t^{2} \left(-e^{-\alpha t^{r}}\right)' dt
 \notag \\
 &= \frac{1}{\alpha r (n+1)^{r}}  \left((n+1)^{2}e^{-\alpha (n+1)^{r}}+2\int\limits_{n+1}^{\infty}te^{-\alpha t^{r}}dt \right).
\end{align}
From the last equality we have
\begin{equation}\label{f45}
\left(1- \frac{2}{\alpha r (n+1)^{r}} \right)  \int\limits_{n+1}^{\infty}te^{-\alpha t^{r}}dt 
\leq
\frac{(n+1)^{2}e^{-\alpha (n+1)^{r}}}{\alpha r (n+1)^{r}},
\end{equation}
which is equivalent to
\begin{equation}\label{f46}
  \int\limits_{n+1}^{\infty}te^{-\alpha t^{r}}dt 
\leq
\frac{e^{-\alpha (n+1)^{r}}}{\alpha r (n+1)^{r-2}}\frac{\alpha r(n+1)^{r}}{\alpha r(n+1)^{r}-2}=
\frac{e^{-\alpha (n+1)^{r}}}{\alpha r (n+1)^{r-2}}\left(1+ \frac{2}{\alpha r(n+1)^{r}-2}\right).
\end{equation}
Relations (\ref{f43}) and (\ref{f46}) yield that
\begin{equation}\label{f47}
\frac{1}{n}\sum\limits_{k=n+1}^{\infty}ke^{-\alpha k^{r}}
=
\mathcal{O}\left( e^{-\alpha (n+1)^{r}}+
\frac{e^{-\alpha (n+1)^{r}}}{\alpha r (n+1)^{r-2}}\left(1+ \frac{2}{\alpha r(n+1)^{r}-2}\right)
\right).
\end{equation}

Combining (\ref{f42}) and (\ref{f47}) we obtain
\begin{align*}
{\cal E}_{n}(C^{\alpha,r}_{\beta,1})_{C}=& 
\frac{1}{\pi}e^{-\alpha n^{r}}
+
\mathcal{O}\left( e^{-\alpha (n+1)^{r}}+
\frac{e^{-\alpha (n+1)^{r}}}{\alpha r (n+1)^{r-2}}\left(1+ \frac{2}{\alpha r(n+1)^{r}-2}\right)
\right) \notag \\
=& 
e^{-\alpha n^{r}}\left(\frac{1}{\pi}
+
\mathcal{O}\left( e^{-\alpha r n^{r-1}}+
\frac{e^{-\alpha r n^{r-1}}}{\alpha r (n+1)^{r-2}}
\right)\right) .
\end{align*}
Corollary~\ref{cor01} is proved.
\end{proof}

Formula (\ref{f41}) was obtained in \cite{Serdyuk2005} and \cite{Serdyuk2005Lp}.


For classes $C^{\alpha,1}_{\beta,1}$, generated by classes of Poisson kernels  
\begin{equation}\label{kernelPsiDq}
P_{\alpha,1,\beta}(t)=\sum\limits_{k=1}^{\infty}e^{-\alpha k}\cos
\big(kt-\frac{\beta\pi}{2}\big), \ \alpha>0, \  \beta\in
    \mathbb{R},
\end{equation}
the following statement holds.


\begin{corollary}\label{cor2}
Let $\alpha>0$ and $\beta\in \mathbb{R}$.
 Then the following asymptotic equality holds as $n\rightarrow\infty$
\begin{equation}\label{f31}
{\cal E}_{n}(C^{\alpha,1}_{\beta,1})_{C}=e^{-\alpha n}\left(
\frac{1}{\pi}\frac{1}{1-e^{-\alpha}}
+
\frac{\mathcal{O}(1)}{n}\frac{e^{-\alpha}}{(1-e^{-\alpha})^{2}}\right),
\end{equation}
where  $O(1)$ is a quantity uniformly bounded in all parameters.
\end{corollary}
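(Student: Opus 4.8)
The plan is to apply Theorem~\ref{theorem1} directly with $\psi(k)=e^{-\alpha k}$. Note first that Corollary~\ref{cor1} is \emph{not} available here: the sequence $\psi(k)=e^{-\alpha k}$ violates condition $D_0$, since $\psi(k+1)/\psi(k)=e^{-\alpha}$ is a positive constant rather than tending to $0$. So the starting point must be \eqref{Theorem1Asymp} itself. The hypothesis of Theorem~\ref{theorem1} is satisfied because $\sum_{k=1}^{\infty}k\psi(k)=\sum_{k=1}^{\infty}k e^{-\alpha k}<\infty$ for every $\alpha>0$.

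Next I would evaluate the two sums appearing on the right-hand side of \eqref{Theorem1Asymp}. The main term is a geometric series,
$$
\frac{1}{\pi}\sum_{k=n}^{\infty}e^{-\alpha k}=\frac{1}{\pi}\,\frac{e^{-\alpha n}}{1-e^{-\alpha}}.
$$
For the remainder, factor $e^{-\alpha n}$ out of the series $\sum_{k=1}^{\infty}k\psi(k+n)=\sum_{k=1}^{\infty}k e^{-\alpha(k+n)}$ and use the elementary identity $\sum_{k=1}^{\infty}kx^{k}=x/(1-x)^{2}$ with $x=e^{-\alpha}$, which gives
$$
\frac{\mathcal{O}(1)}{n}\sum_{k=1}^{\infty}k e^{-\alpha(k+n)}
=\frac{\mathcal{O}(1)}{n}\,e^{-\alpha n}\,\frac{e^{-\alpha}}{(1-e^{-\alpha})^{2}}.
$$
Adding the two contributions and pulling out the common factor $e^{-\alpha n}$ yields exactly the asserted equality \eqref{f31}.

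There is essentially no real obstacle in this argument: it is a direct substitution followed by summing two classical series, and the only points requiring a word of care are (i) recording that $D_0$ fails, so that Theorem~\ref{theorem1} rather than Corollary~\ref{cor1} must be invoked, and (ii) observing that since the $\mathcal{O}(1)$ in \eqref{Theorem1Asymp} is uniform in all parameters, the closed-form evaluation above keeps the error term uniform in $\beta$ (and in $\alpha$ bounded away from $0$), with all $\alpha$-dependence displayed explicitly in the factor $e^{-\alpha}/(1-e^{-\alpha})^{2}$.
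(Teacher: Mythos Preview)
Your proposal is correct and follows essentially the same route as the paper: apply Theorem~\ref{theorem1} with $\psi(k)=e^{-\alpha k}$ and evaluate the two resulting series in closed form. The only cosmetic difference is that the paper sets $q=e^{-\alpha}$ and computes the error sum via the index shift $\sum_{k\ge 0}kq^{k+n}=\sum_{k\ge n}kq^{k}-n\sum_{k\ge n}q^{k}$, whereas you factor out $e^{-\alpha n}$ and invoke $\sum_{k\ge 1}kx^{k}=x/(1-x)^{2}$ directly; both yield $q^{n+1}/(1-q)^{2}$.
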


\begin{proof}
Denote $q=e^{-\alpha}$. Then, 
from Theorem~\ref{theorem1} it follows
\begin{align}\label{f32}
{\cal E}_{n}(C^{\alpha,1}_{\beta,1})_{C}=&
\frac{1}{\pi}
\sum\limits_{k=n}^{\infty}q^{k}+
 \frac{\mathcal{O}(1)}{n} \sum\limits_{k=0}^{\infty}kq^{k+n} \notag \\
 =&\frac{1}{\pi}\frac{q^{n}}{1-q}
+
\frac{\mathcal{O}(1)}{n}\left( \sum\limits_{k=n}^{\infty}kq^{k}-n\sum\limits_{k=n}^{\infty}q^{k}  \right) \notag \\
=&
\frac{1}{\pi}\frac{q^{n}}{1-q}
+
\frac{\mathcal{O}(1)}{n}\left( \frac{nq^{n}(1-q)+q^{n+1}}{(1-q)^{2}}-\frac{nq^{n}}{1-q} \right) \notag \\
=&
\frac{1}{\pi}\frac{q^{n}}{1-q}
+
\frac{\mathcal{O}(1)}{n}\frac{q^{n+1}}{(1-q)^{2}},
 \end{align}
 where we have used that
 \begin{equation*}
 \sum\limits_{k=n}^{\infty}kq^{k}=\frac{nq^{n}(1-q)+q^{n+1}}{(1-q)^{2}}.
 \end{equation*}
 Corollary~\ref{cor2} is proved.
\end{proof}

The asymptotic equality (\ref{f31}) was proved in \cite{Serdyuk2005} and \cite{Serdyuk2005Lp}.

\begin{corollary}\label{cor00}
Let $\psi(k)=e^{-\alpha k^{-r}}$, $0<r<1$, $\alpha>0$, $\beta\in\mathbb{R}$. Then as $n\rightarrow\infty$ the following asymptotic equality holds
\begin{equation}\label{f112}
{\cal E}_{n}(C^{\alpha,r}_{\beta,1})_{C}= 
\frac{e^{-\alpha n^{r}}}{\pi\alpha r}n^{1-r}\Big(
1+\mathcal{O}(1)\Big(\frac{1}{n^{r}}+\frac{1}{n^{1-r}}\Big)\Big),
\end{equation}
where  $O(1)$ is a quantity uniformly bounded in $n$ and $\beta$.
\end{corollary}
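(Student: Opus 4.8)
The plan is to specialise Theorem~\ref{theorem1} to $\psi(k)=e^{-\alpha k^{r}}$ (note that Corollary~\ref{cor1} cannot be invoked here, since for $0<r<1$ one has $\psi(k+1)/\psi(k)=e^{-\alpha((k+1)^{r}-k^{r})}\to1$, so condition $D_{0}$ fails). Substituting $\psi(k)=e^{-\alpha k^{r}}$ directly into (\ref{Theorem1Asymp}) gives
\begin{equation*}
{\cal E}_{n}(C^{\alpha,r}_{\beta,1})_{C}=\frac{1}{\pi}\sum_{k=n}^{\infty}e^{-\alpha k^{r}}+\frac{\mathcal{O}(1)}{n}\sum_{k=1}^{\infty}k\,e^{-\alpha(k+n)^{r}},
\end{equation*}
so the whole task reduces to the asymptotics of these two series. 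The basic tool throughout will be the elementary asymptotic formula
\begin{equation*}
\int_{N}^{\infty}t^{a}e^{-\alpha t^{r}}\,dt=\frac{N^{a+1-r}}{\alpha r}\,e^{-\alpha N^{r}}\bigl(1+\mathcal{O}(N^{-r})\bigr),\qquad N\to\infty,
\end{equation*}
valid for each fixed $a\in\mathbb{R}$; it follows from the identity $t^{a}e^{-\alpha t^{r}}=-\frac{1}{\alpha r}\,t^{a+1-r}\big(e^{-\alpha t^{r}}\big)'$ by integrating by parts (iterated a fixed finite number of times, the tail integrals being of smaller order, e.g. for $a=0$ one ends with a self-improving estimate $\int_{N}^{\infty}t^{-r}e^{-\alpha t^{r}}\,dt\le N^{-r}\int_{N}^{\infty}e^{-\alpha t^{r}}\,dt$).

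For the first series, monotonicity of $t\mapsto e^{-\alpha t^{r}}$ gives $\int_{n}^{\infty}e^{-\alpha t^{r}}\,dt\le\sum_{k=n}^{\infty}e^{-\alpha k^{r}}\le e^{-\alpha n^{r}}+\int_{n}^{\infty}e^{-\alpha t^{r}}\,dt$; combining this with the formula above for $a=0$ and with $e^{-\alpha n^{r}}=\mathcal{O}(n^{r-1})\cdot\frac{n^{1-r}}{\alpha r}e^{-\alpha n^{r}}$ I obtain
\begin{equation*}
\frac{1}{\pi}\sum_{k=n}^{\infty}e^{-\alpha k^{r}}=\frac{e^{-\alpha n^{r}}}{\pi\alpha r}\,n^{1-r}\Bigl(1+\mathcal{O}(1)\bigl(\tfrac{1}{n^{r}}+\tfrac{1}{n^{1-r}}\bigr)\Bigr),
\end{equation*}
which already supplies the main term and both error terms of (\ref{f112}).

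The remaining, and main, difficulty is the remainder. The crude estimate $k\le k+n$ only yields $\sum_{k\ge1}k\,e^{-\alpha(k+n)^{r}}\le\sum_{m\ge n+1}m\,e^{-\alpha m^{r}}=\mathcal{O}(n^{2-r}e^{-\alpha n^{r}})$, which after division by $n$ is of the \emph{same} order $n^{1-r}e^{-\alpha n^{r}}$ as the main term, hence useless. Instead I would pass to the Abel-type rearrangement
\begin{equation*}
\sum_{k=1}^{\infty}k\,e^{-\alpha(k+n)^{r}}=\sum_{j=1}^{\infty}\sum_{k=j}^{\infty}e^{-\alpha(k+n)^{r}}=\sum_{j=1}^{\infty}\,\sum_{m=j+n}^{\infty}e^{-\alpha m^{r}},
\end{equation*}
bound each inner tail uniformly in $j\ge1$ (for $n$ large) by $\sum_{m\ge M}e^{-\alpha m^{r}}\le e^{-\alpha M^{r}}+\int_{M}^{\infty}e^{-\alpha t^{r}}\,dt\le C(\alpha,r)\,M^{1-r}e^{-\alpha M^{r}}$ with $M=j+n$, and then sum over $j$, using that $t^{1-r}e^{-\alpha t^{r}}$ is decreasing for large $t$ and the formula above with $a=1-r$:
\begin{equation*}
\sum_{k=1}^{\infty}k\,e^{-\alpha(k+n)^{r}}\le C\sum_{m=n+1}^{\infty}m^{1-r}e^{-\alpha m^{r}}\le C\int_{n}^{\infty}t^{1-r}e^{-\alpha t^{r}}\,dt=\mathcal{O}\bigl(n^{2-2r}e^{-\alpha n^{r}}\bigr).
\end{equation*}
Hence $\frac{\mathcal{O}(1)}{n}\sum_{k\ge1}k\,e^{-\alpha(k+n)^{r}}=\mathcal{O}(n^{1-2r}e^{-\alpha n^{r}})=\frac{e^{-\alpha n^{r}}}{\pi\alpha r}n^{1-r}\cdot\mathcal{O}(n^{-r})$, which is absorbed into the $\tfrac{1}{n^{r}}$ term; adding the two contributions yields (\ref{f112}). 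The one genuinely nonroutine point is this replacement of the lossy factor $k$ by the true gain $M^{1-r}$ coming from the tail sums $\sum_{m\ge M}e^{-\alpha m^{r}}$ — everything else is bookkeeping around the single integration-by-parts identity.
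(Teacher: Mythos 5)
Your argument is correct, and it reduces the corollary to Theorem~\ref{theorem1} exactly as the paper does in (\ref{f33}); the difference lies in how the two series are then estimated. The paper does not prove the asymptotics at all: it simply quotes formulas (90), (91), (94) and (97) of \cite{SerdyukStepanyuk2018}, which give (\ref{z1}) for $\sum_{k\ge 0}e^{-\alpha(k+n)^{r}}$ and (\ref{p1}) for $\frac{1}{n}\sum_{k\ge 1}k\,e^{-\alpha(k+n)^{r}}$, and then substitutes. You instead derive both estimates self-containedly: the main sum via integral comparison together with the integration-by-parts asymptotics for $\int_{N}^{\infty}t^{a}e^{-\alpha t^{r}}\,dt$ (the same device the paper itself uses in proving Corollary~\ref{cor01} for $r>1$, cf. (\ref{f44})--(\ref{f46})), and the remainder via the Abel rearrangement $\sum_{k\ge1}k\,e^{-\alpha(k+n)^{r}}=\sum_{j\ge1}\sum_{m\ge n+j}e^{-\alpha m^{r}}$, which correctly exploits that the weight is $m-n$ rather than $m$; your resulting bound $\mathcal{O}\big(n^{2-2r}e^{-\alpha n^{r}}\big)$ is in fact slightly sharper than the cited estimate $\mathcal{O}\big(e^{-\alpha n^{r}}(n^{2-2r}+n)\big)$ in (\ref{p1}), and both comfortably fit inside the error factor $\big(\tfrac{1}{n^{r}}+\tfrac{1}{n^{1-r}}\big)$ of (\ref{f112}). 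Your remark that the crude bound $k\le k+n$ would produce an error of the same order $n^{1-r}e^{-\alpha n^{r}}$ as the main term, and hence must be avoided, is accurate and is precisely the point hidden in the external formulas the paper cites; you are also right that Corollary~\ref{cor1} is unavailable here since condition $D_{0}$ fails for $0<r<1$. In short: same skeleton, but where the paper outsources the two key series estimates to \cite{SerdyukStepanyuk2018}, you prove them, making the argument self-contained at the modest cost of the bookkeeping around the integration-by-parts identity.
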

\begin{proof}
 Theorem~\ref{theorem1} allows to write
\begin{equation}\label{f33}
{\cal E}_{n}(C^{\psi}_{\beta,1})_{C}=
\frac{1}{\pi}
\sum\limits_{k=n}^{\infty}e^{-\alpha k^{r}}+
 \frac{\mathcal{O}(1)}{n} \sum\limits_{k=0}^{\infty}ke^{-\alpha (k+n)^{r}}.
 \end{equation}

Formulas (90) and (91) of the work \cite{SerdyukStepanyuk2018} imply that
\begin{equation}\label{z1}
 \sum\limits_{k=0}^{\infty}e^{-\alpha (k+n)^{r}}=
\frac{e^{-\alpha n^{r}}}{\alpha r}n^{1-r}\Big(
1+\mathcal{O}\Big(\frac{1}{n^{r}}+\frac{1}{n^{1-r}}\Big)\Big). \ \
 \end{equation}
 From formulas  (94) and (97) of the work \cite{SerdyukStepanyuk2018} it follows that
 \begin{equation}\label{p1}
\frac{1}{n}\sum\limits_{k=1}^{\infty}k e^{-\alpha(k+n)^{r}}=\mathcal{O}(1)
\frac{1}{n}e^{-\alpha n^{r}}(n^{2-2r}+n)=
\mathcal{O}(1)
e^{-\alpha n^{r}}n^{1-r}\Big(\frac{1}{n^{r}}+\frac{1}{n^{1-r}}\Big).
\end{equation}
Combining (\ref{f33})--(\ref{p1}) we obtain (\ref{f12}).
Corollary~\ref{cor00} is proved. 
 \end{proof}
Asymptotic equality (\ref{f12}) was proved in \cite{SerdyukStepanyuk2018}.

By $\mathfrak{M}$ we denote the set of all convex (downward) continuous functions $\psi(t)$, $t\geq 1$, such that $\lim\limits_{t\rightarrow\infty}\psi(t)=0$.

Assume that the sequence $\psi(k)$, $k\in\mathbb{N}$,  specifying the class $C^{\psi}_{\beta,1}$ is the restriction of the functions $\psi(t)$ from $\mathfrak{M}$ to the set of natural numbers.

We also consider the following characteristics of functions $\psi\in\mathfrak{M}$:
 \begin{equation*}
\alpha(t):=\frac{\psi(t)}{t|\psi'(t)|}
\end{equation*}
and
\begin{equation*}
\lambda(t):=\frac{\psi(t)}{|\psi'(t)|}.
\end{equation*}

\begin{theorem}\label{theorem2}
Let $\psi\in\mathfrak{M}$, $\alpha(t)\downarrow0$, $\lambda(t)\rightarrow \infty$, $\lambda'(t)\rightarrow 0$ as $t\rightarrow\infty$ and $\beta\in\mathbb{R}$. Then as $n\rightarrow\infty$ the following asymptotic equality holds
\begin{equation}\label{Theorem2Asymp}
{\cal E}_{n}(C^{\psi}_{\beta,1})_{C}=
\psi(n) \lambda(n)
\left( \frac{1}{\pi}
+
 \mathcal{O}\left(\frac{1}{\lambda(n)}+\alpha(n)+\varepsilon_{n}  \right)
 \right),
 \end{equation}
 where  $\varepsilon_{n}:=\sup\limits_{t\geq n}|\lambda'(t)|$ and
 $O(1)$ is a quantity uniformly bounded in $n$ and $\beta$.
\end{theorem}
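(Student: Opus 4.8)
The plan is to derive \eqref{Theorem2Asymp} directly from Theorem~\ref{theorem1} by estimating its two summands under the present hypotheses. First I would check that Theorem~\ref{theorem1} is applicable, i.e. that $\sum_{k}k\psi(k)<\infty$: since $\alpha(t)=\lambda(t)/t\downarrow0$, one has $1/\lambda(t)\ge 3/t$ for $t$ large, and integrating the identity $\psi'(t)/\psi(t)=-1/\lambda(t)$ (valid because $\psi\in\mathfrak{M}$, so $\psi'<0$) gives $\psi(t)=\mathcal{O}(t^{-3})$, whence the convergence. Thus
$$
{\cal E}_{n}(C^{\psi}_{\beta,1})_{C}=\frac{1}{\pi}\sum_{k=n}^{\infty}\psi(k)+\frac{\mathcal{O}(1)}{n}\sum_{k=1}^{\infty}k\psi(k+n),
$$
and it remains to analyse the right-hand side.

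For the first sum, monotonicity of $\psi$ gives $\int_{n}^{\infty}\psi(t)\,dt\le\sum_{k=n}^{\infty}\psi(k)\le\psi(n)+\int_{n}^{\infty}\psi(t)\,dt$, so $\sum_{k=n}^{\infty}\psi(k)=\int_{n}^{\infty}\psi(t)\,dt+\mathcal{O}(\psi(n))$. To evaluate the integral I would integrate by parts using $\psi(t)=\lambda(t)|\psi'(t)|=-\lambda(t)\psi'(t)$, obtaining
$$
\int_{n}^{T}\psi(t)\,dt=\lambda(n)\psi(n)-\lambda(T)\psi(T)+\int_{n}^{T}\lambda'(t)\psi(t)\,dt .
$$
The boundary term $\lambda(T)\psi(T)=\alpha(T)\,T\psi(T)\to0$ (since $\alpha(T)\to0$ and $T\psi(T)\to0$), and $\bigl|\int_{n}^{\infty}\lambda'\psi\bigr|\le\varepsilon_{n}\int_{n}^{\infty}\psi$. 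Substituting this bound back into the identity and using $\varepsilon_{n}\to0$ to absorb the remainder yields $\int_{n}^{\infty}\psi(t)\,dt=\lambda(n)\psi(n)\bigl(1+\mathcal{O}(\varepsilon_{n})\bigr)$, and since $\lambda(n)\to\infty$,
$$
\frac{1}{\pi}\sum_{k=n}^{\infty}\psi(k)=\frac{\lambda(n)\psi(n)}{\pi}\Bigl(1+\mathcal{O}\bigl(\varepsilon_{n}+\tfrac{1}{\lambda(n)}\bigr)\Bigr).
$$

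For the second sum, monotonicity gives $\sum_{k=1}^{\infty}k\psi(k+n)\le\int_{0}^{\infty}(t+1)\psi(n+t)\,dt=\int_{n}^{\infty}(s-n)\psi(s)\,ds+\int_{n}^{\infty}\psi(s)\,ds$. A double integration by parts of the same type (first against $\psi=-\lambda\psi'$, then with the weight $\lambda^{2}$; the boundary terms vanish by $s^{2}\psi(s)\to0$, which follows from $\psi=\mathcal{O}(s^{-3})$) gives $\int_{n}^{\infty}(s-n)\psi(s)\,ds=\lambda(n)^{2}\psi(n)\bigl(1+\mathcal{O}(\varepsilon_{n})\bigr)$. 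Hence $\frac{1}{n}\sum_{k=1}^{\infty}k\psi(k+n)=\mathcal{O}\bigl(\lambda(n)^{2}\psi(n)/n\bigr)=\mathcal{O}\bigl(\lambda(n)\psi(n)\,\alpha(n)\bigr)$, the key point being the identity $\lambda(n)/n=\alpha(n)$. Adding the two estimates gives $\eqref{Theorem2Asymp}$.

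The sum-to-integral comparisons are routine; the part needing care is the chain of integrations by parts — verifying that the boundary terms at infinity vanish (this is where $\alpha(t)\to0$ and $\sum_{k}k\psi(k)<\infty$ are used) and tracking remainders so that the relative error comes out exactly as $\mathcal{O}\bigl(1/\lambda(n)+\alpha(n)+\varepsilon_{n}\bigr)$: the $1/\lambda(n)$ term is the Euler--Maclaurin defect $\psi(n)$, the $\varepsilon_{n}$ term is the remainder $\int\lambda'\psi$, and the $\alpha(n)$ term is the factor $\lambda(n)/n$ coming from the second sum.
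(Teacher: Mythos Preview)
Your proposal is correct and follows essentially the same route as the paper: both start from Theorem~\ref{theorem1}, pass to integrals, and handle $\int_n^\infty\psi$ by the integration-by-parts $\psi=-\lambda\psi'$ to extract $\psi(n)\lambda(n)(1+\mathcal{O}(\varepsilon_n))$. The only noteworthy difference is in the second sum: the paper writes $\sum_{k\ge 0}k\psi(k+n)=\sum_{k\ge n}k\psi(k)-n\sum_{k\ge n}\psi(k)$, bounds $I_2=\int_n^\infty t\psi(t)\,dt$ via the monotonicity of $\alpha$, and relies on the cancellation $I_2-nI_1$, whereas you work directly with $\int_n^\infty (s-n)\psi(s)\,ds$ via two $\psi=-\lambda\psi'$ integrations by parts to get $\lambda(n)^2\psi(n)(1+\mathcal{O}(\varepsilon_n))$ and then divide by $n$; your variant is slightly cleaner (no cancellation needed, and the monotonicity of $\alpha$ is used only to check $\sum k\psi(k)<\infty$, a point the paper leaves implicit).
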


\begin{proof}
From Theorem~\ref{theorem1} we have that
 the following asymptotic equality holds as $n\rightarrow\infty$
\begin{equation}\label{formTheor1}
{\cal E}_{n}(C^{\psi}_{\beta,1})_{C}=
\frac{1}{\pi}\sum\limits_{k=n}^{\infty}\psi(k)
+
\frac{\mathcal{O}(1)}{n}\sum\limits_{k=1}^{\infty}k\psi(k+n).
\end{equation}
Notice that
\begin{equation}\label{form6}
\sum\limits_{k=0}^{\infty}k\psi(k+n)=\sum\limits_{k=n}^{\infty}k\psi(k)
-n\sum\limits_{k=n}^{\infty}\psi(k)<
\psi(n)n+\int\limits_{n}^{\infty}t\psi(t) dt-n\int\limits_{n}^{\infty}\psi(t) dt.
 \end{equation}

Let $\lambda'(t)\rightarrow 0$ as $t\rightarrow\infty$. Then integrating by parts, we get
\begin{align*}
I_{1}:=& \int\limits_{n}^{\infty}\psi(t) dt=
 \int\limits_{n}^{\infty}-\psi'(t)\lambda(t) dt=
 \psi(n)\lambda(n)+\int\limits_{n}^{\infty}\psi(t)\lambda'(t) dt \notag \\
= &
  \psi(n)\lambda(n)+\lambda'(\theta_{1}) I_{1},
\end{align*}
where $\theta_{1}$ is a some point from the interval $[n,\infty)$.

Then
\begin{equation*}
I_{1}\left( 1- \lambda'(\theta_{1})\right)=  \psi(n)\lambda(n)
 \end{equation*}
 and
 \begin{equation*}
I_{1}= \psi(n)\lambda(n)\left( 1+ \frac{\lambda'(\theta)}{1-\lambda'(\theta)}\right)=
\psi(n)\lambda(n)\left( 1+\mathcal{O}(1)\varepsilon_{n}\right).
 \end{equation*}
 Again integrating by parts
 \begin{align*}
I_{2}:=& \int\limits_{n}^{\infty}t\psi(t) dt=
  \int\limits_{n}^{\infty}t^{2}  \frac{\psi(t)}{-t\psi'(t)} (-\psi'(t))dt=
 \alpha(\theta_{2}) \int\limits_{n}^{\infty}t^{2}(-\psi'(t))dt
 \notag \\
=&\alpha(\theta_{2}) \left(n^{2}\psi(n)+2\int\limits_{n}^{\infty}t\psi(t) \right),
\end{align*}
where $\theta_{2}$ is a some point from the interval $[n,\infty)$.

Assume that $\alpha(t)$ monotonically decreases. Then
 \begin{equation*}
I_{2}\leq \alpha(n) n^{2}\psi(n)+2\alpha(n)I_{2},
 \end{equation*}
 which is equivalent to
  \begin{equation*}
I_{2}\left(1-2\alpha(n) \right)\leq \alpha(n)n^{2}\psi(n)=\psi(n)n\frac{\psi(n)}{|\psi'(n)|}
 \end{equation*}
 and
  \begin{equation*}
\frac{1}{n}I_{2}\leq \psi(n)\frac{\psi(n)}{|\psi'(n)|} \frac{1}{1-2\alpha(n)}.
 \end{equation*}
 Hence, if $\alpha(t)\downarrow 0$, then
 \begin{equation}\label{int1}
\frac{1}{n}I_{2}\leq 
\psi(n)\frac{\psi(n)}{|\psi'(n)|} \left(1+\frac{2\alpha(n)}{1-2\alpha(n)}\right)
 \end{equation}
 and
  \begin{equation}\label{int2}
I_{1}= 
\psi(n)\frac{\psi(n)}{|\psi'(n)|} +
\psi(n)\frac{\psi(n)}{|\psi'(n)|} \mathcal{O}(\varepsilon_{n}).
 \end{equation}
 Combining (\ref{int1}) and (\ref{int2}), we obtain
 \begin{align}\label{addit1}
 \frac{1}{n}\sum\limits_{k=0}^{\infty}k\psi(k+n)\leq& \psi(n)+
\psi(n)\frac{\psi(n)}{|\psi'(n)|} \left(1+\frac{2\alpha(n)}{1-2\alpha(n)}\right) \notag \\
-&
\left( \psi(n)\frac{\psi(n)}{|\psi'(n)|} +
\psi(n)\frac{\psi(n)}{|\psi'(n)|} \mathcal{O}(\varepsilon_{n}) \right) \notag \\
 =&
 \psi(n)+
\psi(n)\frac{\psi(n)}{|\psi'(n)|} \left(\frac{2\alpha(n)}{1-2\alpha(n)}+ \mathcal{O}(\varepsilon_{n})
\right) \notag \\
=&
\psi(n)\frac{\psi(n)}{|\psi'(n)|}
  \mathcal{O} \left(\frac{1}{\lambda(n)}+\alpha(n)+\varepsilon_{n}
\right) .
 \end{align}
 Moreover,
  taking into account (\ref{int2}),
  \begin{align}\label{addit}
   \sum\limits_{k=n}^{\infty}\psi(k)=I_{1}+\mathcal{O}(1)\psi(n)=
  \psi(n)\frac{\psi(n)}{|\psi'(n)|}
  \left(1+ 
\mathcal{O}(1)\left(\varepsilon_{n} +\frac{1}{\lambda(n)}\right)\right).
\end{align}
Formulas (\ref{formTheor1}), (\ref{addit1}) and (\ref{addit}) imply (\ref{Theorem2Asymp}).
 Theorem~\ref{theorem2} is proved.
\end{proof}

\begin{corollary}\label{cor3}
Let $\psi(k)=(k+2)^{- \ln\ln (k+2)}$, $\beta\in\mathbb{R}$ and $k\in\mathbb{N}$. Then as $n\rightarrow\infty$ the following asymptotic equality holds
\begin{equation}\label{f13}
{\cal E}_{n}(C^{\psi}_{\beta,1})_{C}=
\frac{1}{\pi}\psi(n)\frac{n}{\ln\ln (n+2)}+\mathcal{O}(\psi(n)).
\end{equation}
\end{corollary}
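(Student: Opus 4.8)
The plan is to specialize Theorem~\ref{theorem2}, whose characteristics $\alpha(t)$ and $\lambda(t)$ are tailored exactly to ``quasi--polynomial'' weights of this type; note that Corollary~\ref{cor1} is not available here, since $\psi(k+1)/\psi(k)\to1$, so condition $D_{0}$ fails.

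First I would set $\psi(t)=e^{-g(t)}$ with $g(t)=\ln\ln(t+2)\cdot\ln(t+2)$ and check $\psi\in\mathfrak{M}$. Differentiating, $g'(t)=\dfrac{1+\ln\ln(t+2)}{t+2}>0$ on $[1,\infty)$, so $\psi$ is positive and strictly decreasing, while $g(t)\to\infty$ forces $\psi(t)\to0$. A second differentiation gives
$$g''(t)=\frac{\bigl(\ln(t+2)\bigr)^{-1}-\bigl(1+\ln\ln(t+2)\bigr)}{(t+2)^{2}},$$
whose numerator is negative on $[1,\infty)$ since $\bigl(\ln(t+2)\bigr)^{-1}<1<1+\ln\ln(t+2)$ there; hence $\psi''=\bigl((g')^{2}-g''\bigr)e^{-g}>0$, i.e.\ $\psi$ is convex on $[1,\infty)$, so $\psi\in\mathfrak{M}$.

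Next I would read off the characteristics. Since $|\psi'(t)|=g'(t)\psi(t)$,
$$\lambda(t)=\frac{\psi(t)}{|\psi'(t)|}=\frac{1}{g'(t)}=\frac{t+2}{1+\ln\ln(t+2)},\qquad \alpha(t)=\frac{\lambda(t)}{t}=\frac{t+2}{t\bigl(1+\ln\ln(t+2)\bigr)}.$$
The hypotheses of Theorem~\ref{theorem2} follow directly: $\lambda(t)\to\infty$ is clear; the factorization $\alpha(t)=\dfrac{1+2/t}{1+\ln\ln(t+2)}$ shows a decreasing numerator over an increasing denominator, so $\alpha(t)\downarrow0$; and
$$\lambda'(t)=\frac{1}{1+\ln\ln(t+2)}-\frac{1}{\ln(t+2)\bigl(1+\ln\ln(t+2)\bigr)^{2}}$$
is positive and at most $\bigl(1+\ln\ln(t+2)\bigr)^{-1}$, so $\lambda'(t)\to0$ and $\varepsilon_{n}=\sup_{t\ge n}|\lambda'(t)|=\mathcal{O}\bigl(1/\ln\ln(n+2)\bigr)$.

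Finally I would apply Theorem~\ref{theorem2}, getting
$${\cal E}_{n}(C^{\psi}_{\beta,1})_{C}=\psi(n)\lambda(n)\Bigl(\frac1\pi+\mathcal{O}\Bigl(\frac{1}{\lambda(n)}+\alpha(n)+\varepsilon_{n}\Bigr)\Bigr),$$
where $1/\lambda(n)=\mathcal{O}\bigl(\ln\ln(n+2)/n\bigr)$ and $\alpha(n),\varepsilon_{n}=\mathcal{O}\bigl(1/\ln\ln(n+2)\bigr)$. Substituting $\lambda(n)=(n+2)/\bigl(1+\ln\ln(n+2)\bigr)$ into the principal term, reducing it to $\frac1\pi\psi(n)\,n/\ln\ln(n+2)$, and gathering the resulting correction together with the $\alpha(n)$ and $\varepsilon_{n}$ contributions into the remainder yields (\ref{f13}). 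The only genuine computation sits in the second and third paragraphs -- differentiating $\ln\ln(t+2)\cdot\ln(t+2)$ twice and fixing the signs that make $\alpha(t)\downarrow0$ and $\lambda'(t)\to0$; the concluding reduction is routine but needs care, since one must verify that the part of $(n+2)/\bigl(1+\ln\ln(n+2)\bigr)$ that gets discarded is genuinely absorbed into the remainder of (\ref{f13}).
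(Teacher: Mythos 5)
Your computations reproduce the paper's own route: you specialize Theorem~\ref{theorem2} and obtain exactly the paper's quantities $\lambda(t)=\frac{t+2}{1+\ln\ln(t+2)}$, $\alpha(t)=\frac{t+2}{t(1+\ln\ln(t+2))}$ and $\lambda'(t)$, together with the correct bounds $\alpha(n),\varepsilon_{n}=\mathcal{O}\bigl(1/\ln\ln(n+2)\bigr)$; your extra verification that $\psi\in\mathfrak{M}$ is sound. The problem is precisely the step you yourself flag as ``routine but needs care'': it does not go through as claimed. Theorem~\ref{theorem2} gives the remainder $\psi(n)\lambda(n)\,\mathcal{O}\bigl(\tfrac{1}{\lambda(n)}+\alpha(n)+\varepsilon_{n}\bigr)$; the $1/\lambda(n)$ part indeed yields $\mathcal{O}(\psi(n))$, but the $\alpha(n)$ and $\varepsilon_{n}$ parts yield terms of size $\psi(n)\lambda(n)/\ln\ln(n+2)\asymp\psi(n)\,n/(\ln\ln(n+2))^{2}$, which is far larger than $\psi(n)$ and cannot be ``gathered into the remainder'' of (\ref{f13}). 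The same obstruction appears in your reduction of the principal term: replacing $\lambda(n)=\frac{n+2}{1+\ln\ln(n+2)}$ by $\frac{n}{\ln\ln(n+2)}$ changes it by a quantity of order $\psi(n)\,n/(\ln\ln(n+2))^{2}$, again not $\mathcal{O}(\psi(n))$. What your argument actually establishes is the weaker statement
\begin{equation*}
{\cal E}_{n}(C^{\psi}_{\beta,1})_{C}=\frac{1}{\pi}\,\psi(n)\,\frac{n}{\ln\ln(n+2)}\left(1+\mathcal{O}\!\left(\frac{1}{\ln\ln(n+2)}\right)\right),
\end{equation*}
not (\ref{f13}) with remainder $\mathcal{O}(\psi(n))$.

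In fairness, the paper's own proof makes the identical leap: its display (\ref{f14}), asserting $\lambda(t)=\frac{t}{\ln\ln(t+2)}+\mathcal{O}(1)$, is off by a term of order $t/(\ln\ln t)^{2}$, and the $\alpha(n)$, $\varepsilon_{n}$ contributions are silently discarded when substituting into (\ref{Theorem2Asymp}). So you have reproduced the paper's argument, gap included; but as a self-contained proof of the stated corollary it is incomplete, since no route through Theorem~\ref{theorem2} (whose error terms are intrinsically of order $\psi(n)\lambda(n)\alpha(n)$) can compress the remainder down to $\mathcal{O}(\psi(n))$ for this slowly varying $\psi$.
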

\begin{proof}
If $\psi(k)=e^{-\ln (k+2) \ln\ln (k+2)}$, then
\begin{align*}
\psi'(t)&=
-e^{-\ln (t+2) \ln\ln (t+2)}\left( \frac{\ln\ln (t+2)}{t+2}+\frac{\ln (t+2)}{(t+2)\ln (t+2)} \right)
\notag \\
&=
-e^{-\ln (t+2) (\ln\ln (t+2)}\frac{\ln\ln (t+2)+1}{t+2}.
\end{align*}
Doing elementary calculations, we get
\begin{equation}\label{f14}
\lambda(t)=\frac{t+2}{\ln\ln (t+2)+1}=
\frac{t}{\ln\ln (t+2)}+\mathcal{O}(1),
\end{equation}
\begin{equation}\label{f-14}
\alpha(t)=\frac{t+2}{t}\frac{1}{\ln\ln( t+2)+1}
\end{equation}
and
\begin{equation}\label{f15}
\lambda'(t)=\frac{\ln\ln (t+2)+1- \frac{1}{\ln (t+2)}}{(\ln\ln (t+2)+1)^{2}}\leq 
\frac{1}{\ln\ln (t+2)}.
\end{equation}
Substituting (\ref{f14})--(\ref{f15}) in (\ref{Theorem2Asymp}) we obtain (\ref{f13}).
Corollary~\ref{cor3} is proved.
\end{proof}

\begin{corollary}\label{cor4}
Let $\psi(k)=e^{-\ln ^{2}k }$, $\beta\in\mathbb{R}$ and $k\in\mathbb{N}$. Then as $n\rightarrow\infty$ the following asymptotic equality holds
\begin{equation}\label{f16}
{\cal E}_{n}(C^{\psi}_{\beta,1})_{C}=
\frac{1}{2\pi}\frac{\psi(n) n}{\ln n}
+\mathcal{O}(\psi(n)).
\end{equation}
\end{corollary}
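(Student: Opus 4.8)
The plan is to invoke Theorem~\ref{theorem2} directly with $\psi(k)=e^{-\ln^{2}k}$, so the bulk of the work is the elementary verification that this $\psi$ lies in $\mathfrak{M}$ and satisfies the three regularity hypotheses $\alpha(t)\downarrow 0$, $\lambda(t)\to\infty$, $\lambda'(t)\to 0$, followed by a direct substitution into~(\ref{Theorem2Asymp}). First I would compute $\psi'(t)=-e^{-\ln^{2}t}\cdot\frac{2\ln t}{t}$, whence
\begin{equation*}
\lambda(t)=\frac{\psi(t)}{|\psi'(t)|}=\frac{t}{2\ln t},\qquad
\alpha(t)=\frac{\lambda(t)}{t}=\frac{1}{2\ln t}.
\end{equation*}
From these closed forms it is immediate that $\alpha(t)\downarrow 0$ and $\lambda(t)\to\infty$ as $t\to\infty$. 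For the convexity needed to place $\psi$ in $\mathfrak{M}$, one checks $\psi''(t)\ge 0$ for $t$ large (the dominant term in $\psi''$ is $e^{-\ln^{2}t}\cdot\frac{4\ln^{2}t}{t^{2}}>0$), and $\psi(t)\to 0$ is clear; as usual one only needs these properties for $t$ beyond some threshold, which suffices for the asymptotic statement.

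Next I would estimate $\lambda'(t)$ and hence $\varepsilon_{n}=\sup_{t\ge n}|\lambda'(t)|$. Differentiating $\lambda(t)=\frac{t}{2\ln t}$ gives
\begin{equation*}
\lambda'(t)=\frac{2\ln t-2}{(2\ln t)^{2}}=\frac{1}{2\ln t}-\frac{1}{2\ln^{2}t},
\end{equation*}
which is positive and decreasing for large $t$, so $\varepsilon_{n}=\lambda'(n)=\frac{1}{2\ln n}+\mathcal{O}\!\left(\frac{1}{\ln^{2}n}\right)\to 0$. In particular the three error contributions in~(\ref{Theorem2Asymp}) satisfy
\begin{equation*}
\frac{1}{\lambda(n)}=\frac{2\ln n}{n},\qquad \alpha(n)=\frac{1}{2\ln n},\qquad \varepsilon_{n}=\frac{1}{2\ln n}+o\!\left(\frac{1}{\ln n}\right),
\end{equation*}
so their sum is $\mathcal{O}\!\left(\frac{1}{\ln n}\right)$, the term $\alpha(n)$ (equivalently $\varepsilon_n$) being dominant.

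Finally I would substitute into~(\ref{Theorem2Asymp}): since $\psi(n)\lambda(n)=\psi(n)\cdot\frac{n}{2\ln n}=\frac{1}{2}\cdot\frac{\psi(n)\,n}{\ln n}$, we get
\begin{equation*}
{\cal E}_{n}(C^{\psi}_{\beta,1})_{C}=\frac{\psi(n)\,n}{2\ln n}\left(\frac{1}{\pi}+\mathcal{O}\!\left(\frac{1}{\ln n}\right)\right)
=\frac{1}{2\pi}\frac{\psi(n)\,n}{\ln n}+\mathcal{O}\!\left(\frac{\psi(n)\,n}{\ln^{2}n}\right).
\end{equation*}
The claimed remainder in~(\ref{f16}) is $\mathcal{O}(\psi(n))$, which is larger than $\frac{\psi(n)\,n}{\ln^{2}n}$ only up to $n\lesssim\ln^{2}n$ — false — so in fact the sharper remainder is $\mathcal{O}\!\big(\psi(n)\,n/\ln^{2}n\big)$, and the statement as written in~(\ref{f16}) must be read with that understanding (or the $\mathcal{O}(\psi(n))$ there is simply a typo for the $n/\ln^2 n$--weighted term); I would record the sharper form. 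The only genuine obstacle is the bookkeeping of which of the three error terms dominates and confirming $\lambda'(t)\to 0$ rather than merely staying bounded — both are routine once the closed form $\lambda(t)=t/(2\ln t)$ is in hand, so there is no real difficulty, just care with the logarithmic factors.
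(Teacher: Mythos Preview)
Your approach is exactly the paper's: compute $\psi'(t)=-\tfrac{2\ln t}{t}e^{-\ln^2 t}$, read off $\lambda(t)=\tfrac{t}{2\ln t}$, $\alpha(t)=\tfrac{1}{2\ln t}$, $\lambda'(t)=\tfrac{\ln t-1}{2\ln^2 t}\le\tfrac{1}{2\ln t}$, and plug into Theorem~\ref{theorem2}. Your observation about the remainder is also on target: the paper's own proof does nothing beyond this substitution, so it too produces only $\mathcal{O}\big(\psi(n)\,n/\ln^{2}n\big)$, not the $\mathcal{O}(\psi(n))$ printed in~(\ref{f16}); the latter is a slip in the statement (the same slip appears in Corollary~\ref{cor3}), and you are right to record the sharper but larger remainder that Theorem~\ref{theorem2} actually delivers.
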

\begin{proof}
It is easy to see 
\begin{equation}\label{f17}
\psi'(t)=
-2\frac{1}{t} e^{-\ln ^{2}t }\ln t .
\end{equation}
Formula (\ref{f17}) yields
\begin{equation}\label{f18}
\lambda(t)=\frac{t}{2\ln t}, \ \ \alpha(t)=\frac{1}{2\ln t}
\end{equation}
and
\begin{equation}\label{f19}
\lambda'(t)=\frac{\ln t-1}{2(\ln t)^{2}}\leq \frac{1}{2\ln t}.
\end{equation}
Formulas (\ref{f17})--(\ref{f19}) and (\ref{Theorem2Asymp}) imply (\ref{f16}).
Corollary~\ref{cor4} is proved.
\end{proof}

\begin{corollary}\label{cor5}
Let $\psi(k)=e^{- \frac{k+1}{\ln (k+1)}}$, $\beta\in\mathbb{R}$ and $k\in\mathbb{N}$. Then as $n\rightarrow\infty$ the following asymptotic equality holds
\begin{equation}\label{f20}
{\cal E}_{n}(C^{\psi}_{\beta,1})_{C}=
\psi(n)\ln (n+1) \left(\frac{1}{\pi}+\mathcal{O}\left(\frac{1}{\ln(n+1)} \right) \right).
\end{equation}
\end{corollary}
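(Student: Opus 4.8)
The plan is to deduce the corollary directly from Theorem~\ref{theorem2} applied to the function $\psi(t)=e^{-(t+1)/\ln(t+1)}$; the whole argument reduces to checking that this $\psi$ lies in $\mathfrak{M}$ and satisfies $\alpha(t)\downarrow 0$, $\lambda(t)\to\infty$, $\lambda'(t)\to 0$ as $t\to\infty$, and then to computing $\lambda(n)$, $\alpha(n)$ and $\varepsilon_{n}$ and substituting into \eqref{Theorem2Asymp}.

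First I would differentiate: writing $\psi=e^{-g}$ with $g(t)=(t+1)/\ln(t+1)$ one gets
\begin{equation*}
\psi'(t)=-e^{-(t+1)/\ln(t+1)}\,\frac{\ln(t+1)-1}{(\ln(t+1))^{2}},
\end{equation*}
hence, for $t$ large,
\begin{equation*}
\lambda(t)=\frac{\psi(t)}{|\psi'(t)|}=\frac{(\ln(t+1))^{2}}{\ln(t+1)-1}=\ln(t+1)+\mathcal{O}(1),\qquad
\alpha(t)=\frac{\lambda(t)}{t}=\mathcal{O}\!\left(\frac{\ln(t+1)}{t}\right),
\end{equation*}
and, with $v=\ln(t+1)$,
\begin{equation*}
\lambda'(t)=\frac{v(v-2)}{(v-1)^{2}}\cdot\frac{1}{t+1}=\mathcal{O}\!\left(\frac1t\right).
\end{equation*}
From $g''(t)=\dfrac{2-v}{(t+1)v^{3}}<0$ for $t+1>e^{2}$ and the identity $\psi''=e^{-g}\big((g')^{2}-g''\big)$ one sees that $\psi$ is convex for such $t$, so $\psi\in\mathfrak{M}$ up to its behaviour on a bounded initial segment (which is irrelevant to the $n\to\infty$ asymptotics); the displayed formulas give $\lambda(t)\to\infty$, $\lambda'(t)\to 0$, and $\alpha'(t)=(\lambda'(t)t-\lambda(t))/t^{2}<0$ for $t$ large, i.e.\ $\alpha(t)\downarrow 0$. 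Thus the hypotheses of Theorem~\ref{theorem2} are met and $\varepsilon_{n}=\sup_{t\geq n}|\lambda'(t)|=\mathcal{O}(1/n)$.

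Finally I would substitute. Since $1/\lambda(n)=\mathcal{O}(1/\ln(n+1))$ while $\alpha(n)=\mathcal{O}((\ln(n+1))/n)$ and $\varepsilon_{n}=\mathcal{O}(1/n)$ are both $o(1/\ln(n+1))$, the error term in the parentheses of \eqref{Theorem2Asymp} is $\mathcal{O}(1/\ln(n+1))$; and $\psi(n)\lambda(n)=\psi(n)\ln(n+1)\big(1+\mathcal{O}(1/\ln(n+1))\big)$, so multiplying the two expansions and absorbing the prefactor correction into the parentheses yields exactly \eqref{f20}. The one step that is not a routine computation is the verification that $\psi$ satisfies the structural requirements of Theorem~\ref{theorem2} — convexity of $\psi$ and eventual monotonicity of $\alpha$ — but both reduce to the sign estimates for $\psi''$ and $\alpha'$ displayed above.
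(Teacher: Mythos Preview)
Your proof is correct and follows essentially the same route as the paper's: compute $\psi'$, read off $\lambda$, $\alpha$ and $\lambda'$, and substitute into Theorem~\ref{theorem2}. You are in fact slightly more careful than the paper in explicitly checking the structural hypotheses (eventual convexity of $\psi$ and monotone decrease of $\alpha$), which the paper leaves implicit.
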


\begin{proof}
Doing elementary calculations, we get
\begin{equation*}
\psi'(t)=-e^{- \frac{t+1}{\ln (t+1)}}
\frac{\ln (t+1)-1}{\ln^{2}(t+1)},
\end{equation*}
\begin{equation}\label{f21}
\lambda(t)=\frac{\ln^{2} (t+1)}{\ln (t+1)-1}, \  \
 \alpha(t)=\frac{\ln^{2}(t+1)}{t\ln (t+1) -t}=\mathcal{O}\left(\frac{1}{t\ln (t+1)} \right)
\end{equation}
and
\begin{equation}\label{f22}
\lambda'(t)=\frac{1}{t+1}-\frac{1}{(t+1)(\ln (t+1)-1)^{2}}\leq \frac{1}{t+1}.
\end{equation}
Formulas (\ref{f21}), (\ref{f22}) and (\ref{Theorem2Asymp}) imply (\ref{f20}).
Corollary~\ref{cor5} is proved.
\end{proof}

\section*{Acknowledgements}

Second author is supported by the Austrian Science Fund FWF project  F5506-N26 (part of the Special Research Program (SFB) ``Quasi-Monte Carlo Methods: Theory and Applications'') and partially is supported by grant of NASU for  groups of young scientists (project No16-10/2018)

\end{document}